\documentclass[letterpaper, 10 pt, conference]{ieeeconf}  

\IEEEoverridecommandlockouts                              

\usepackage{graphicx}
\usepackage{siunitx}
\usepackage{amsmath} 
\usepackage{amssymb}  
\usepackage{color}
\usepackage{cite}
\usepackage{enumerate}
\DeclareMathOperator{\e}{\mathrm{e}}

\newtheorem{proposition}{Proposition}
\newtheorem{remark}{Remark}
\newtheorem{lemma}{Lemma}

\newtheorem{corollary}{Corollary}
\title{\LARGE \bf Output Corridor Impulsive Control of First-order Continuous System\\ with Non-local Attractivity Analysis}

\author{Alexander Medvedev and Anton V. Proskurnikov
\thanks{Alexander Medvedev [{\tt\small alexander.medvedev@it.uu.se}] is with Department of Information Technology, 
        Uppsala University, SE-752 37 Uppsala, Sweden.}%
\thanks{Anton V. Proskurnikov  [{\tt\small anton.p.1982@ieee.org}] is with the Department of Electronics and Telecommunications, Politecnico di Torino, Turin, Italy, 10129.}%
}

\begin{document}

\maketitle
\thispagestyle{empty}
\pagestyle{empty}

\begin{abstract}
This paper addresses the design of an impulsive controller for a  continuous scalar time-invariant linear plant that constitutes the simplest conceivable model of 
chemical kinetics. The model is ubiquitous
in process control as well as pharmacometrics and readily generalizes to systems of Wiener structure. Given the impulsive nature of the feedback, the control problem formulation is particularly suited to discrete dosing applications in engineering and medicine, where both doses and inter-dose intervals are manipulated. Since the feedback controller acts at discrete time instants and employs both amplitude and frequency modulation, whereas the plant is continuous, the closed-loop system exhibits hybrid dynamics featuring complex nonlinear phenomena. The problem of confining the plant output to a predefined corridor of values is considered. The method at the heart of the proposed approach is to design a stable periodic solution, called a 1-cycle, whose one-dimensional orbit coincides with the predefined corridor. Conditions ensuring local and global attractivity of the 1-cycle are established. As a numerical illustration of the proposed approach, the problem of intravenous paracetamol dosing is considered.

\end{abstract}

\section{INTRODUCTION}
Impulsive feedback  constitutes an attractive  paradigm in applications where corrective action is executed seldom and a discrete (sampled-data) controller with synchronous sampling is excessive or infeasible. Open- and closed-loop impulsive control is widely used in, e.g., discrete dosing of chemicals and drugs, mechanical systems with impacts \cite{KZM25}, spacecraft control \cite{SLG21}, temperature regulation, and ecological and environmental systems~\cite{HCN14}. More spectacular application examples are impulsive altitude regulation in hot air balloons by means of ``burner duty cycle" and the actuation of pumped hydrofoils through load-unload body movement.  Because the plant evolves in an open loop between impulses, impulsive control is most effective for systems whose dynamics are sufficiently slow over the inter-impulse intervals.

Since process control traditionally aims at stabilizing the measured output at a predefined setpoint, the literature on impulsive control has naturally emphasized stability and dissipativity properties, whereas the design of sustained periodic regimes has received less attention. 
Periodic solutions are typically characterized by fixed points of a discrete (Poincar{\'e} or return) map~\cite{HCN14,GC20}.
Then designing a feedback that sustains a desired periodic solution corresponds to the stabilization of a fixed point.  

 In process control, 90--95\% of all plant models are linear of first order, possibly being augmented with a static nonlinearity. A similarly, in pharmacometry, pharmacokinetics (PK) of more than 90\% of drugs are adequately described by a single-compartment model with elimination of first-order~\cite{EW07}. Naturally, the pharmacodynamics (PD) are nonlinear, and generally, well captured  by Hill functions. 

 Studying scalar continuous dynamics under pulse-modulated feedback, which gives rise to a hybrid system of effective order two, significantly simplifies the mathematical setup without rendering the problem trivial. The nonlinear dynamics of such hybrid systems were investigated in~\cite{ZCM12a}, where a wide range of complex phenomena were demonstrated, including high-period cycles and deterministic chaos. The model in~\cite{ZCM12a} stems from studies of the so-called Impulsive Goodwin's Oscillator (IGO), which was motivated by endocrine regulation models~\cite{Aut09} and builds on the continuous Goodwin's oscillator~\cite{Good65}, a widespread model in mathematical biology. The IGO model has been generalized to systems with an arbitrary number of compartments; however, the relevant works~\cite{Churilov2020,PRM24} mainly focus on the existence and local orbital stability of special periodic solutions (1-cycles). Low-order dynamics in the single-compartment case not only introduce particularities in the nonlinear dynamics analysis~\cite{ZCM12a} but also open the perspective of solving some impulsive control design problems explicitly.
 



An example of such a problem, important for drug dosing, is the design of a controller that maintains the output of the plant within a predefined corridor. The standard approach to solving this problem for a general dosing system is model predictive control~\cite{Ntouskas21}. An alternative method, proposed in~\cite{MPZ24}, is based on exciting and stabilizing a special periodic solution of the impulsive system, called 1-cycle. The approach in~\cite{MPZ24} is based on the properties of the three-dimensional IGO (in particular, on the extreme points of its impulse response) and cannot be directly applied to the simplest single-compartment dynamics. Furthermore, in the general case, only local stability of the 1-cycle is guaranteed. 

\paragraph*{Contributions} The contributions of this paper are as follows. First, we formulate the output corridor control problem for scalar (single-compartment) kinetics and exploit its special structure to obtain an analytic expression for the periodic solution that tightly satisfies the corridor constraint -- that is, the orbit endpoints coincide with the corridor bounds. Second, we derive conditions for the global attractivity of this solution. Third, we show that a proper design of the impulsive feedback guarantees super-exponential convergence in a neighborhood of the solution, which in practice is indistinguishable from finite-time convergence. Finally, we apply the results to paracetamol dosing and illustrate the theoretical findings by numerical simulations.



\section{PROBLEM FORMULATION}\label{sec.igo}

Consider the scalar single-compartment kinetic model
\begin{equation}                            \label{eq:1}
\dot{x}(t) = ax(t), \quad x(0)>0,  \quad a<0,
\end{equation}
that is Hurwitz stable and positive: $x(t)=\e^{ta}x(0)>0$ for all $t$. To achieve the desired closed-loop behavior, a pulse-modulated feedback is introduced in which  the impulse weights and impulse timing are determined by the measured continuous plant state $x(t)$:
\begin{align}\label{eq:2}
x(t_n^+) &= x(t_n^-) +\lambda_n , \quad                                          
t_{n+1} =t_n+T_n,
\\ 
T_n &=\Phi(x(t_n^-)), \; \lambda_n= F(x(t_n^-)), \quad n=0,1,\ldots \notag
\end{align}
 In theory of pulse-modulated systems~\cite{GC98}, $F(\cdot)$ is called the amplitude modulation function and $\Phi(\cdot)$ is referred to as the frequency modulation function. The minus and plus in a superscript in~\eqref{eq:2} denote the left-sided and
a right-sided limit, respectively. Notice  that the continuous-time state $x(t)$ has discontinuities due to  the jumps in \eqref{eq:2}. The instants $t_n$ are called (impulse) firing times and $\lambda_n$ represents the corresponding impulse weight. The equation for the firing times is a difference equation of first order and contributes discrete dynamics to the closed loop. 

To explicitly restrict the domain in which the solutions of  closed-loop system~\eqref{eq:1},~\eqref{eq:2} ultimately evolve,  boundedness of the modulation functions is 
required 
\begin{equation}                             \label{eq:2a}
0<\Phi_1\le \Phi(\cdot)\le\Phi_2, \quad 0<F_1\le F(\cdot)\le F_2,
\end{equation}
where $\Phi_1$, $\Phi_2$, $F_1$, $F_2$ are constants. Under these limitations, all solutions asymptotically arrive at an interval that can be computed explicitly~\cite{ZCM12a}.

The modulation functions are assumed to be locally Lipschitz, and hence differentiable almost everywhere and absolutely continuous on each finite interval. In the standard IGO design, the monotonicity conditions are imposed~\cite{ZCM12a,PRM24} requiring that $F(\cdot)$ is \emph{non-increasing} and $\Phi(\cdot)$ is \emph{non-decreasing}; accordingly, the controller~\eqref{eq:2} realizes negative feedback from the continuous output to both the pulse amplitude and the pulse frequency. Specifically, an increased value of $x(t_n^-)$ leads to a decreased (or unchanged) weight $\lambda_{n}$ of the instantaneous impulse. Furthermore, the interval $t_{n+1}-t_n$ before the next impulse increases, so that the pulse sequence becomes sparser. This negative feedback mechanism keeps the controlled output in the vicinity of the stationary solution while operating exclusively through positive signals. As shown below, this feedback property is not necessary for the stabilization of the periodic orbit, although it ensures some additional features of the closed-loop system. We thus present the theory separately for general controller~\eqref{eq:2} and for the negative feedback.


The modulation functions $F(\cdot)$, $\Phi(\cdot)$ constitute  the degrees of freedom in the impulsive controller design and are selected to ensure certain desired properties of closed-loop system~\eqref{eq:1},~\eqref{eq:2}. As discussed in Section~\ref{sec.stab}, the time of the first impulse $t_0\geq 0$ is an additional design parameter that can be chosen in an event-triggered manner; in some situations, this allows to achieve the control objective in finite time. We suppose that, if $t_0>0$, the state evolves according to~\eqref{eq:1} on the initial interval $(0,t_0)$. 

In the present study, the following control problem is addressed.
\paragraph*{\bf Output corridor impulsive control problem}  
Given the plant~\eqref{eq:1},
design the modulation functions $\Phi(\cdot)$ and $F(\cdot)$ of the impulsive feedback controller~\eqref{eq:2} that satisfy~\eqref{eq:2a} and maintain the steady-state values of $x(t)$ within the corridor 
\begin{equation}\label{eq:corridor}
    x(t)\in \lbrack x_{\min}^*,x_{\max}^*\rbrack, \quad 0<x_{\min}^*<x_{\max}^*.
\end{equation}

The solution scheme for this problem, presented in the subsequent sections, is as follows. We first design a special periodic solution, termed a 1-cycle, that tightly satisfies the corridor condition~\eqref{eq:corridor}. We then establish conditions guaranteeing local and global attractivity of the 1-cycle. Solutions attracted by the 1-cycle satisfy~\eqref{eq:corridor} at steady state.

\section{Background results}

In this section, we collect several technical results used throughout the paper. Henceforth we assume that $F,\Phi$ satisfy global bounds~\eqref{eq:2a} but yet are not necessarily monotone.

\subsection*{ Ultimate bounds on the solutions}
The solution to~\eqref{eq:1} is monotonically decreasing between consecutive impulses. Indeed, for $t\in(t_{n-1},t_n)$, one has
\begin{gather}
x(t)=\e^{a(t-t_n)}x_n\quad \text{whence}\notag\\
x_n < x(t) < \e^{-a(t_n-t_{n-1})}x_n=\e^{-a\Phi(x_{n-1})}x_n.\label{eq:x-via-xn}
\end{gather}
As a consequence, the following ultimate bounds on the solutions can be derived.
\begin{proposition}[\cite{ZCM12a}]\label{prop.bound} For any admissible initial conditions $x(0)>0,t_0\geq 0$, the solution to \eqref{eq:1} under  pulse-modulated feedback  \eqref{eq:2} is ultimately bounded from above and below
\begin{align}
    \lim_{t\to\infty}\inf x(t)&\ge \frac{F_1}{\e^{-a\Phi_2}-1},\label{eq:upper-old}\\
    \lim_{t\to\infty}\sup x(t)&\le \frac{F_2}{1-\e^{a\Phi_1}}\label{eq:lower-old}.
\end{align}
\end{proposition}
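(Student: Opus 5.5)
We reduce the continuous-time problem to a scalar recursion for the values just before the impulses. Denote $x_n = x(t_n^-)$, in line with~\eqref{eq:x-via-xn}. Between firings the plant evolves freely, so
\begin{equation*}
x_{n+1} = \e^{a\Phi(x_n)}\bigl(x_n + F(x_n)\bigr), \quad n=0,1,\ldots
\end{equation*}
Write $q_n=\e^{a\Phi(x_n)}$. By~\eqref{eq:2a}, $q_n\in[\e^{a\Phi_2},\e^{a\Phi_1}]\subset(0,1)$ and $F(x_n)\in[F_1,F_2]$. This gives the two one-sided affine comparisons
\begin{equation*}
x_{n+1}\le \e^{a\Phi_1}x_n + \e^{a\Phi_1}F_2,\qquad x_{n+1}\ge \e^{a\Phi_2}x_n+\e^{a\Phi_2}F_1 .
\end{equation*}
They are valid because $x_n>0$, so enlarging the multiplier enlarges the product. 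Iterating these linear contractions, the comparison sequences converge to their fixed points. Hence
\begin{equation*}
\limsup_{n\to\infty} x_n\le \frac{\e^{a\Phi_1}F_2}{1-\e^{a\Phi_1}},\qquad \liminf_{n\to\infty} x_n\ge \frac{\e^{a\Phi_2}F_1}{1-\e^{a\Phi_2}}=\frac{F_1}{\e^{-a\Phi_2}-1}.
\end{equation*}
The explicit error terms are $\e^{na\Phi_1}x_0$ and $\e^{na\Phi_2}x_0$, which vanish as $n\to\infty$.

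Next we pass from the sampled values to all $t$, using the monotone decay between impulses. On $(t_{n-1},t_n)$ the solution decreases from $x(t_{n-1}^+)=x_{n-1}+F(x_{n-1})$ down to $x_n$.
\begin{itemize}
\item \emph{Lower bound.} We have $x(t)>x_n$ on that interval, so $\liminf_{t\to\infty}x(t)\ge\liminf_n x_n$. This gives~\eqref{eq:upper-old} directly.
\item \emph{Upper bound.} We have $x(t)< x_{n-1}+F_2$, so $\limsup_{t\to\infty} x(t)\le \limsup_n x_n+F_2\le \frac{\e^{a\Phi_1}F_2}{1-\e^{a\Phi_1}}+F_2=\frac{F_2}{1-\e^{a\Phi_1}}$. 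This is~\eqref{eq:lower-old}.
\end{itemize}
The labels of the two displayed inequalities in the statement appear interchanged, but the content is as above.

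Two side points need care. First, the initial interval $(0,t_0)$ only contributes a finite transient, since $x_0=\e^{at_0}x(0)>0$. Second, the firing times tend to infinity because $T_n\ge\Phi_1>0$, so $t\to\infty$ is equivalent to $n\to\infty$.

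The main (mild) obstacle is justifying the comparison step rigorously. I would do it by induction, with $y_{n+1}=\e^{a\Phi_1}(y_n+F_2)$ and $y_0=x_0$. The map $y\mapsto \e^{a\Phi_1}(y+F_2)$ is increasing, and $x_{n+1}\le \e^{a\Phi_1}(x_n+F_2)$. Hence $x_n\le y_n$ for all $n$, and $y_n$ converges to the fixed point. The lower comparison is handled symmetrically.
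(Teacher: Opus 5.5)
Your proof is correct and follows the route the paper indicates. The paper gives no separate proof: it cites \cite{ZCM12a} and derives the bounds from the monotone inter-impulse decay \eqref{eq:x-via-xn} together with \eqref{eq:2a}, which is your affine comparison on the sampled recursion followed by sandwiching $x(t)$ between $x_n$ and the preceding post-impulse value. Bounding that post-impulse value as $x_{n-1}+F(x_{n-1})\le x_{n-1}+F_2$, rather than bounding the two factors of $\e^{-a\Phi(x_{n-1})}x_n$ separately, is exactly what yields the stated constant $F_2/(1-\e^{a\Phi_1})$.
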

\vskip1mm

As will be shown below in Lemma~\ref{lem.bound}, the estimates from Proposition~\ref{prop.bound} can be substantially tightened in the case of the one-dimensional IGO with monotone nonlinearities~\cite{ZCM12a}. In view of Proposition~\ref{prop.bound}, the bounds for the modulation functions should be chosen compatible with the corridor~\eqref{eq:corridor}:
\[
 \frac{F_1}{\e^{a\Phi_2}-1}\le x_{\min}^*<x_{\max}^*\le \frac{F_2}{1-\e^{-a\Phi_1}},
\]
since otherwise the full corridor cannot be utilized.

Another immediate consequence of~\eqref{eq:x-via-xn} is that, under  controller~\eqref{eq:2}, one has $t_n\to\infty$, and thus the corridor condition~\eqref{eq:corridor} at steady state is equivalent to the relation
\begin{equation}\label{eq:corridor-t_n}
x_{\min}^*\leq\liminf_{n\to\infty}x(t_n^-)\leq \limsup_{n\to\infty}x(t_n^+)\leq x_{\max}^*.
\end{equation}

\subsection*{The return map and 1-cycles}

The satisfaction of the corridor condition thus depends only on the sequence of sampled (pre-impulse) states 
\[
x_n\triangleq x(t_n^-),\quad n=0,1,\ldots
\] 
which, in accordance with~\eqref{eq:1},~\eqref{eq:2}, obey the recursion
\begin{equation}\label{eq:discrete}
    x_{n+1}=Q(x_n),
\end{equation}
where the \textbf{return map} is defined as follows:
\begin{equation}\label{eq:Q}
Q(x)\triangleq\e^{a\Phi(x)}(x+F(x)).
\end{equation}

Recall that $\Phi$ and $F$ are continuous and satisfy~\eqref{eq:2a}, entailing that $Q(0)>0$ and $Q(x)<x$ for sufficiently large $x>0$.
Hence, $Q$ admits a fixed point (possibly, non-unique)
\begin{equation}\label{eq:fp}
    x_*=Q(x_*)>0.
\end{equation}
The fixed point gives rise to a periodic solution of system~\eqref{eq:1},~\eqref{eq:2} with $x_n=x(t_n^-)\equiv x_*$, $x(t_n^+)\equiv x_*+\lambda$ and $t_{n+1}-t_n\equiv T$, where $T\triangleq\Phi(x_*)$ and $\lambda\triangleq F(x_*)$.
Such a periodic solution to system~\eqref{eq:1},~\eqref{eq:2}, with only one impulse fired by the pulse-modulated feedback in the least period, is called a \textbf{1-cycle}. As discussed in~\cite{ZCM12a,Churilov2020}, any 1-cycle corresponds to a fixed point of return map~\eqref{eq:fp}.

Note that the period $T=\Phi(x_*)$ and the impulse weight $\lambda=F(x_*)$ determine the 1-cycle uniquely, since $x(t)=\e^{a(t-t_n)}x_*$ for $t\in (t_n,t_{n+1})$, and equation~\eqref{eq:fp} yields 
\begin{equation}\label{eq:fp_an}
x_*=\frac{\lambda \e^{aT}}{1-\e^{aT}}.
\end{equation}
On the other hand, for an arbitrary pair $\lambda,T>0$, the state in~\eqref{eq:fp_an} serves as a fixed point~\eqref{eq:fp} under any modulation functions satisfying the interpolation constraints $T=\Phi(x_*)$, $\lambda=F(x_*)$. 
For brevity, we call the pair $(T,\lambda)$ the \textbf{parameters} of the 1-cycle.

\subsection*{The corridor-spanning 1-cycle} 

In the output corridor impulsive control problem, the parameters are calculated from the corridor bounds and specify one point (i.e., the fixed point in~\eqref{eq:fp_an}) on the modulation functions $F,\Phi$, ensuring the desired steady-state behavior. The procedure follows immediately from the result below.
\begin{proposition}\label{prop:tight-1cycle}
    Let closed-loop system \eqref{eq:1}, \eqref{eq:2} exhibit a 1-cycle with the parameters $(T,\lambda)$. Then, over each period of this 1-cycle, the state lies between the values
    \[
    \inf x(t)=x_*=\frac{\lambda \e^{aT}}{1-\e^{aT}}, \quad \sup x(t)=x_*+\lambda=\frac{\lambda} {1-\e^{aT}}.
    \]
\end{proposition}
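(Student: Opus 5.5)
The plan is to write the 1-cycle explicitly over one period and read off its extremes from the monotonicity of the free response of~\eqref{eq:1}. Take the 1-cycle with parameters $(T,\lambda)$, so $x_n=x(t_n^-)\equiv x_*$, $t_{n+1}-t_n\equiv T=\Phi(x_*)$ and $\lambda=F(x_*)$. Since all periods are identical, it is enough to study one interval $[t_n,t_{n+1})$, with the jump at $t_n$ included and the left limit at $t_{n+1}$ treated as a boundary value.

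First, the jump law~\eqref{eq:2} gives $x(t_n^+)=x_*+\lambda$. On $(t_n,t_{n+1})$ the plant evolves freely, so $x(t)=\e^{a(t-t_n)}(x_*+\lambda)$. Because $a<0$ and $x_*+\lambda>0$, this function is strictly decreasing in $t$. At the right end it approaches $x(t_{n+1}^-)=\e^{aT}(x_*+\lambda)$, which equals $x_*$ by the fixed-point relation~\eqref{eq:fp}, i.e.\ $Q(x_*)=x_*$ with $Q$ from~\eqref{eq:Q}. Hence over one period the state sweeps monotonically from $x_*+\lambda$ down to $x_*$. The supremum is $x_*+\lambda$, attained right after the impulse. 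The infimum is $x_*$, attained as the left limit at the firing instants; if the pre-impulse value is counted as $x(t_n)$, it is attained there as well. This is exactly the situation described by~\eqref{eq:x-via-xn}.

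Second, I solve $x_*=\e^{aT}(x_*+\lambda)$ for $x_*$. This gives $x_*(1-\e^{aT})=\lambda\e^{aT}$, which is~\eqref{eq:fp_an}. Here $1-\e^{aT}>0$ because $a<0$ and $T>0$. Adding $\lambda$ then gives $x_*+\lambda=\lambda\bigl(\e^{aT}+1-\e^{aT}\bigr)/(1-\e^{aT})=\lambda/(1-\e^{aT})$.

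There is no real obstacle in this argument. The only point needing care is the phrase ``inf'' versus ``min'', because the right-continuity convention at the firing times decides whether the lower value $x_*$ is actually attained or is only a limit. I would handle this by noting that the infimum over a period is the same under either convention.
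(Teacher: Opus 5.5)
Your proof is correct and is essentially the paper's argument: the paper applies the inter-impulse bounds \eqref{eq:x-via-xn} with $x_n=x_*$, $\Phi(x_{n-1})=T$ and $x(t_n^+)=x_*+\lambda$, together with \eqref{eq:fp_an}. This is exactly the monotone free response $x(t)=\e^{a(t-t_n)}(x_*+\lambda)$ and the fixed-point relation that you derive explicitly, and your remark that the infimum over a period does not depend on the continuity convention at the firing times is correct.
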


\begin{proof}
The proof is straightforward by applying  inequalities~\eqref{eq:x-via-xn} to the 1-cycle with parameters $(T,\lambda)$ and using the fact that $x_n=x_*$ and $x(t_n^+)=x_n+F(x_n)=x_*+\lambda$, where $x_*$ is specified by~\eqref{eq:fp_an}.
\end{proof}

Given corridor~\eqref{eq:corridor}, a 1-cycle is called~\textbf{corridor-spanning} if its orbit spans the corridor, that is, if its parameters satisfy the equations:
\begin{equation}\label{eq:key-equation}
x_{\min}^*=\frac{\lambda \e^{aT}}{1-\e^{aT}},\quad x_{\max}^*=\frac{\lambda} {1-\e^{aT}}.
\end{equation}

Obviously, equations~\eqref{eq:key-equation} are uniquely solvable for any $a<0$ and every pair $x_{\max}^*>x_{\min}^*>0$:
\begin{align*}
    T=\frac{\ln x_{\min}^*-\ln x_{\max}^*}{a},\quad
    \lambda = x_{\max}^*- x_{\min}^*.
\end{align*}

\section{Global and Local Stability of 1-cycle}\label{sec.stab}

In this section, we investigate the global and local attractivity properties of the corridor-spanning 1-cycle. Note that convergence to the 1-cycle is fully determined by the sequence of sampled states $x_n=x(t_n^-)$.
\begin{proposition}\label{prop:converge}
Consider a solution of closed-loop system~\eqref{eq:1},~\eqref{eq:2}, and assume that $x_n\to x_*=Q(x_*)$, where the fixed point $x_*$ corresponds to the 1-cycle with parameters $(T,\lambda)$. Then, the following statements hold as $n\to\infty$:
\begin{enumerate}[(i)]
\item the post-impulse state $x(t_n^+)$ converges to $x_*+\lambda$;
\item the intervals between consecutive impulses $t_{n+1}-t_n$ converge to $T$;
\item for every intermediate time $0<\tau<T_*$, one has\footnote{Notice that, in this case, $t_n+\tau<t_{n+1}$ for sufficiently large $n$ in view of statement (ii).}
\[
x(t_n+\tau)\xrightarrow{} \e^{a\tau}(x_*+F(x_*)).
\]
\end{enumerate}
In particular, if the 1-cycle is corridor-spanning, then the solution satisfies the corridor condition in~\eqref{eq:corridor} at steady state.
\end{proposition}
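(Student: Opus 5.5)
The plan is to use only the continuity of the modulation functions $F$ and $\Phi$ (they are locally Lipschitz) together with the explicit inter-impulse solution $x(t)=\e^{a(t-t_n)}x(t_n^+)$ for $t\in(t_n,t_{n+1})$.

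For (i), I write $x(t_n^+)=x_n+F(x_n)$. Since $x_n\to x_*$ and $F$ is continuous, this gives $x(t_n^+)\to x_*+F(x_*)=x_*+\lambda$.

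For (ii), I use $t_{n+1}-t_n=\Phi(x_n)$. By continuity of $\Phi$, this tends to $\Phi(x_*)=T$.

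For (iii), I read $T_*$ as the period $T$ and fix $0<\tau<T$. By (ii) there is $N$ such that $\Phi(x_n)>\tau$ for all $n\ge N$, so $t_n+\tau\in(t_n,t_{n+1})$. On that interval the plant evolves freely according to \eqref{eq:1}, hence
\[
x(t_n+\tau)=\e^{a\tau}x(t_n^+)=\e^{a\tau}\bigl(x_n+F(x_n)\bigr).
\]
Applying (i) yields $x(t_n+\tau)\to\e^{a\tau}(x_*+F(x_*))$. The only delicate point is ensuring that $t_n+\tau$ lies before the next firing time; this is exactly the role of (ii) and of the footnote.

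For the final claim, I recall that $t_n\to\infty$, since the inter-impulse intervals are bounded below by $\Phi_1>0$. By the remark preceding \eqref{eq:corridor-t_n}, the corridor condition at steady state is then equivalent to \eqref{eq:corridor-t_n}. From $x_n\to x_*=x_{\min}^*$ and $x(t_n^+)\to x_*+\lambda=x_{\max}^*$, which follow from \eqref{eq:key-equation} and Proposition~\ref{prop:tight-1cycle}, the liminf and limsup in \eqref{eq:corridor-t_n} are attained with equality, so the relation holds.

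If a sharper pointwise statement is wanted, I use monotonicity on each interval, \eqref{eq:x-via-xn}: for $t\in(t_n,t_{n+1})$,
\[
x_{n+1}\le x(t)\le x(t_n^+).
\]
Combined with the limits above, this gives $\liminf_{t\to\infty}x(t)\ge x_{\min}^*$ and $\limsup_{t\to\infty}x(t)\le x_{\max}^*$. Nothing here is hard; the main care is stating "steady state" in this asymptotic (liminf/limsup) sense, since the bounds are approached rather than met exactly.
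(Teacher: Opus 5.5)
Your proposal is correct and follows the paper's own argument: (i)--(iii) come from the continuity of $F,\Phi$ together with the free exponential decay between impulses, and the corridor claim comes from (i), Proposition~\ref{prop:tight-1cycle} and the equivalence with \eqref{eq:corridor-t_n}. Your extra monotonicity step on each interval $(t_n,t_{n+1})$ only makes explicit what the paper leaves to the remark preceding \eqref{eq:corridor-t_n}, and reading $T_*$ as $T$ is the intended meaning.
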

\begin{proof}
The proof of (i)--(iii) follows in a straightforward way from equations~\eqref{eq:1},~\eqref{eq:2} and the continuity of the modulation functions $\Phi,F$. 
To prove the last statement, it suffices to notice that~\eqref{eq:corridor-t_n} is entailed by (i) and Proposition~\ref{prop:tight-1cycle}.
\end{proof}

If the properties from Proposition~\ref{prop:converge} hold, we say that the solution is \textbf{attracted} (or \textbf{converges})  to the 1-cycle.

\subsection*{Global convergence via the contraction property}

The following lemma is a straightforward consequence of the Banach contraction principle~\cite[Theorem~9.23]{Rudin1976}.
\begin{lemma}\label{lem:glob}
Let there exist a constant $q\in [0,1)$ such that 
\begin{equation}\label{eq:contraction}
|Q'(x)|\leq q<1\quad\text{for almost all}\quad x\geq 0. 
\end{equation}
Then the 1-cycle in closed-loop system~\eqref{eq:1},~\eqref{eq:2} is unique and attracts all other admissible solutions. Furthermore, the convergence is exponential: $|x_n-x_*|=O(q^n)$ as $n\to\infty$.
\end{lemma}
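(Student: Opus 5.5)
The plan is to show that the return map $Q$ from~\eqref{eq:Q} is a contraction on the closed half-line $[0,\infty)$, apply the Banach contraction principle to the recursion~\eqref{eq:discrete}, and then lift convergence of the sampled states to convergence of the full solution via Proposition~\ref{prop:converge}.

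\emph{Step 1: Lipschitz estimate for $Q$.} Since $\Phi$ and $F$ are locally Lipschitz, $Q(x)=\e^{a\Phi(x)}(x+F(x))$ is locally Lipschitz, hence absolutely continuous on every compact interval and differentiable almost everywhere. For any $0\le y<x$ we therefore have
\[
Q(x)-Q(y)=\int_y^x Q'(s)\,ds,
\]
and~\eqref{eq:contraction} gives $|Q(x)-Q(y)|\le q|x-y|$. So $Q$ is a $q$-contraction on $[0,\infty)$.

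\emph{Step 2: invariance and the fixed point.} Because $F,\Phi>0$ by~\eqref{eq:2a}, $Q$ maps $[0,\infty)$ into $(0,\infty)$. The half-line is a complete metric space, so the Banach principle yields a unique fixed point $x_*$. Each 1-cycle corresponds to a fixed point of $Q$, as noted after~\eqref{eq:fp}, so the 1-cycle is unique.

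\emph{Step 3: convergence of admissible solutions.} For any admissible initial data ($x(0)>0$, $t_0\ge 0$), the first sample is $x_0=\e^{at_0}x(0)>0$, and all subsequent samples obey $x_{n+1}=Q(x_n)$. Since $x_*=Q(x_*)$,
\[
|x_{n+1}-x_*|=|Q(x_n)-Q(x_*)|\le q|x_n-x_*|,
\]
so $|x_n-x_*|\le q^n|x_0-x_*|=O(q^n)$. In particular $x_n\to x_*$, and Proposition~\ref{prop:converge} then shows the whole solution is attracted to the 1-cycle.

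The only point that needs care is Step 1: the contraction hypothesis holds only almost everywhere, so passing from it to a global Lipschitz bound requires absolute continuity of $Q$. This is why we rely on the local Lipschitz property of $F$ and $\Phi$, which makes $Q$ a product and sum of locally Lipschitz functions. Everything else is routine.
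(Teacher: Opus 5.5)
Your proposal is correct and follows essentially the same route as the paper. Both arguments use the local Lipschitz property of $Q$ to get absolute continuity, integrate the a.e.\ bound \eqref{eq:contraction} to show $Q$ is a $q$-contraction on $[0,\infty)$, apply the Banach principle, and then use Proposition~\ref{prop:converge}. You also state explicitly that $Q$ maps $[0,\infty)$ into itself and that $x_0=\e^{at_0}x(0)>0$, which the paper leaves implicit.
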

\begin{proof}
Since $\Phi,F$ are locally Lipschitz, so is the mapping $Q$. Hence, $Q$ is absolutely continuous on any interval $[x_1,x_2]$, whence
\[
|Q(x_2)-Q(x_1)|\leq\int_{x_1}^{x_2}|Q'(x)|\,dx\leq q(x_2-x_1),
\]
that is, $Q$ is a $q$-contraction on $[0,\infty)$. The Banach contraction principle entails  that  recursion~\eqref{eq:discrete} converges exponentially to $x_*$
as $n\to\infty$; in particular, $x_*$ is the unique fixed point on $[0,\infty)$. Now, in virtue of Proposition~\ref{prop:converge} every solution converges to the 1-cycle, 
associated with $x_*$.
\end{proof}

\subsection*{Exponential and super-exponential local convergence}

It is well known that the local exponential stability of the 1-cycle reduces to the local exponential stability of the fixed point~\cite{Churilov2020}, that is, to the condition $|Q'(x_*)|<1$. It should be noted, however, that the general  results for multidimensional case ensure only orbital stability and rely on differentiability of $Q$ in a neighborhood of $x_*$. 
\begin{proposition}\label{prop:local-exp}
Assume that  contraction condition~\eqref{eq:contraction} holds on some interval $I=[x_*-\delta,x_*+\delta]$. Then $I$ is invariant under  discrete-time dynamics~\eqref{eq:discrete}, and any solution with $x_0=x(t_0^-)\in I$ is attracted to the 1-cycle. Moreover, the convergence is exponential.
\end{proposition}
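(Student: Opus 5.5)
The argument is a localized version of the proof of Lemma~\ref{lem:glob}. Since $\Phi$ and $F$ are locally Lipschitz, the return map $Q$ in~\eqref{eq:Q} is locally Lipschitz, and hence absolutely continuous on $I$. For any $x\in I$, integrating $Q'$ along the segment between $x_*$ and $x$ (which lies in $I$) and using $|Q'|\le q$ almost everywhere on $I$ gives
\[
|Q(x)-x_*|=|Q(x)-Q(x_*)|\le\Bigl|\int_{x_*}^{x}|Q'(s)|\,ds\Bigr|\le q|x-x_*|.
\]
The first equality uses that $x_*$ is a fixed point, see~\eqref{eq:fp}.

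\textbf{Invariance.} If $x\in I$, then $|x-x_*|\le\delta$, so the estimate above gives $|Q(x)-x_*|\le q\delta\le\delta$. Therefore $Q(I)\subseteq I$. The only thing to check is that $I$ lies in the domain $[0,\infty)$. Since $x_*-\delta$ could be negative, I would either assume $\delta<x_*$ implicitly or intersect $I$ with $[0,\infty)$; the bound still holds because it only uses the segment joining $x$ and $x_*$.

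\textbf{Convergence.} By induction, $x_n\in I$ for all $n$ whenever $x_0\in I$, and $|x_{n+1}-x_*|\le q|x_n-x_*|$. Hence $|x_n-x_*|\le q^n|x_0-x_*|\le q^n\delta$, which is exponential convergence of the sampled states. Proposition~\ref{prop:converge} then upgrades $x_n\to x_*$ to attraction of the whole solution to the 1-cycle.

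To get exponential convergence of the continuous-time quantities as well, I would note that local Lipschitzness of $\Phi$ and $F$ yields $|t_{n+1}-t_n-T|\le L_\Phi|x_n-x_*|$ and $|x(t_n^+)-x_*-\lambda|\le(1+L_F)|x_n-x_*|$, where $L_\Phi,L_F$ are Lipschitz constants of $\Phi,F$ on $I$. These bounds are $O(q^n)$ as well. There is no real obstacle in this proof. The only delicate points are that the contraction bound must be derived through $x_*$ specifically, rather than as a global self-map argument, and the domain issue at the left endpoint of $I$.
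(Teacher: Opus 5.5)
Your proposal is correct and follows the paper's route. The bound $|Q(x)-x_*|=|Q(x)-Q(x_*)|\le q|x-x_*|$ on $I$ gives invariance, and iterating it gives $|x_n-x_*|\le q^n|x_0-x_*|$; the paper gets this step by citing the Banach contraction principle. Proposition~\ref{prop:converge} then lifts this to attraction to the 1-cycle. Your remarks on the left endpoint of $I$ and on the $O(q^n)$ rates for $t_{n+1}-t_n$ and $x(t_n^+)$ are sound refinements that the paper does not spell out.
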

\begin{proof}
The invariance is straightforward, since $x\in I$ entails that $|x-x_*|\leq\delta$, whence
\[
|Q(x)-x_*|=|Q(x)-Q(x_*)|\leq q|x-x_*|=q\delta<\delta.
\]
Hence, $Q(x)\in I$, i.e., $I$ is invariant under $Q$, and therefore $x_n\in I$ for all $n$. By the Banach contraction principle, $x_n\to x_*$ exponentially, i.e., the solution is attracted to the 1-cycle in accordance with Proposition~\ref{prop:converge}.
\end{proof}

Note that if $Q$ is continuously differentiable in a neighborhood of $x_*$ and $|Q'(x_*)|<1$, the condition of Proposition~\ref{prop:local-exp} can always be satisfied by choosing $\delta$ small, since $Q'(x)\to Q'(x_*)$ as $x\to x_*$. Proposition~\ref{prop:local-exp} thus gives an estimate, possibly conservative, of the 1-cycle's basin of attraction.

The fastest convergence rate corresponds to the case where $Q$ is ``flat'' at the point $x_*$, i.e., $Q'(x_*)=0$. If $Q$ is $C^2$-smooth on the interval $I$, the residual term in the Taylor expansion $Q(x)-x_*=Q(x)-Q(x_*)-Q'(x_*)(x-x_*)$ admits an explicit estimate
\[
|Q(x)-x_*|\leq \frac12\sup_{\xi\in I}|Q''(\xi)|(x-x_*)^2.
\]
This motivated the following proposition, establishing ultrafast super-exponential\footnote{In the numerical methods literature, this type of convergence is termed ``quadratic'', while standard exponential convergence is termed ``linear.''} convergence in the vicinity of $x_*$.

\begin{proposition} \label{prop:super-exp}
Assume that $Q'(x)=0$ and that $\delta>0$, $\alpha>0$ are chosen so that the  inequality  below holds: 
\[ 
|Q(x)-Q(x_*)|\leq \alpha(x-x_*)^2<\infty\quad\forall x\in I\triangleq[x_*-\delta,x_*+\delta]. 
\] 
For every $r\leq\min(\delta,\alpha^{-1})$, the interval $I_0\triangleq [x_*-r,x_*+r]$ is then invariant under $Q$, and one has 
\begin{equation}\label{eq.super-exp} 
|x_n-x_*|\leq \alpha^{-1}(\alpha|x_0-x_*|)^{2^n},\;\;\forall x_0\in I_0. 
\end{equation} 
\end{proposition}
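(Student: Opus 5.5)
Write $e_n\triangleq x_n-x_*$ and $\varepsilon_n\triangleq\alpha|e_n|$. Since $x_*=Q(x_*)$ by~\eqref{eq:fp}, the hypothesis gives, for every $x\in I$,
\[
|Q(x)-x_*|\leq\alpha(x-x_*)^2 .
\]
In particular, whenever $x_n\in I$ we get $|e_{n+1}|\leq\alpha e_n^2$. Multiplying by $\alpha$ turns this into the normalized recursion $\varepsilon_{n+1}\leq\varepsilon_n^2$. The whole argument consists of iterating this inequality, with $I_0$ serving to keep the iterates inside $I$.

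The first step is invariance of $I_0$. Take $x\in I_0$, so $|x-x_*|\leq r\leq\delta$ and hence $x\in I$. Then
\[
|Q(x)-x_*|\leq\alpha|x-x_*|^2\leq\alpha r\cdot|x-x_*|\leq |x-x_*|\leq r,
\]
where we used $|x-x_*|\le r$ and $\alpha r\leq1$ (because $r\leq\alpha^{-1}$). Therefore $Q(x)\in I_0$, and by induction $x_n\in I_0\subset I$ for all $n$ whenever $x_0\in I_0$.

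The second step is the estimate~\eqref{eq.super-exp}. Since every $x_n$ lies in $I$, the inequality $\varepsilon_{n+1}\leq\varepsilon_n^2$ holds for all $n$. A simple induction gives $\varepsilon_n\leq\varepsilon_0^{2^n}$. Indeed, if $\varepsilon_n\leq\varepsilon_0^{2^n}$, then squaring (both sides are nonnegative) yields $\varepsilon_{n+1}\leq\varepsilon_n^2\leq\varepsilon_0^{2^{n+1}}$. Dividing by $\alpha$ gives exactly $|x_n-x_*|\leq\alpha^{-1}(\alpha|x_0-x_*|)^{2^n}$.

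For this to mean genuine convergence we need $\varepsilon_0<1$, i.e.\ $|x_0-x_*|<\alpha^{-1}$. This holds in the interior of $I_0$, and also at the endpoints whenever $r<\alpha^{-1}$. In the borderline case $r=\alpha^{-1}$ with $|x_0-x_*|=r$, the bound is merely true and does not yield decay; I would note this in a remark. Once $x_n\to x_*$, attraction to the 1-cycle follows from Proposition~\ref{prop:converge}.

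The main obstacle is only to read the hypothesis correctly. The statement writes "$Q'(x)=0$", which should be read as $Q'(x_*)=0$. It plays no role in the proof beyond motivating the quadratic bound, which is assumed directly. The only real care needed is to place $I_0$ inside $I$, so that the quadratic bound applies at every iterate; this is exactly why $r\leq\delta$ is required.
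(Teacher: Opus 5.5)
Your proposal is correct and follows essentially the same route as the paper: invariance of $I_0$ from $r\le\delta$ and $\alpha r\le 1$, then induction on the quadratic bound $|x_{n+1}-x_*|\le\alpha|x_n-x_*|^2$; your normalization $\varepsilon_n=\alpha|x_n-x_*|$ is just tidier bookkeeping for the same induction. Your side remarks are both accurate: the hypothesis should read $Q'(x_*)=0$, and the bound yields no decay in the borderline case $|x_0-x_*|=\alpha^{-1}$.
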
 
\begin{proof} The proof of invariance is similar to Proposition~\ref{prop:local-exp}. Since $\alpha r\leq 1$ and $r\leq \delta$, for all $x\in I_0$ one has 
\[ 
|Q(x)-x_*|\leq \alpha|x-x_*|^2\leq\alpha r^2\leq r, 
\] 
that is, $Q(x)\in I_0$. Thus, if $x_0\in I_0$, then $x_n\in I_0$ for all $n$. Now~\eqref{eq.super-exp} is proved by induction on $n$. The base case $n=0$ is obvious. Assuming that~\eqref{eq.super-exp} holds for $n$ and denoting for brevity $\delta_0=|x_0-x_*|$, one has 
\[ 
|x_{n+1}-x_*|=|Q(x_n)-x_*|\leq\alpha|x_n-x_*|^2\leq\alpha\cdot\alpha^{-2}(\alpha\delta_0)^{2^{n+1}},
\] which is equivalent to~\eqref{eq.super-exp} with $n$ replaced by $n+1$. 
\end{proof}

\subsection*{Choice of initial impulse time}

In drug dosing applications (see Section~\ref{sec.simul}), drug administration is typically initiated by a bolus dose calculated based on a combination of patient-specific factors (weight, age, organ function) and independent of patient response. 
Control algorithm~\eqref{eq:2} fires the first impulse at time $t_0$ that can be regarded as an additional design parameter. For instance, if $x(0)>x_*$, the controller can wait until the solution of~\eqref{eq:1} reaches $x_*$. In reality, due to  model and measurement  uncertainty, $x(t)$ is in a small neighborhood of $x_*$.

In the matter of fact, the distinction between local and global attractivity is negligible, as one can guarantee that $x_0=x(t_0^-)$ is close to $x_*$ (see the Case~1 in Section~\ref{sec.simul}). Disregarding measurement and modeling errors, one can formally guarantee finite-time (deadbeat) convergence to the 1-cycle.
However, medical regimens often impose a maximum time interval between consecutive doses (naturally captured by the bound $\Phi_2$ in~\eqref{eq:2a}), which the initial time $t_0$ cannot exceed. Therefore, one cannot initiate the pulse-modulated feedback at the point $x_*$, and the convergence rate to the 1-cycle should be considered.
\color{black}

\section{The Negative Feedback Case}\label{sec.negative}

In this section, we consider the one-dimensional IGO system introduced in~\cite{ZCM12a}, characterized by a monotonically non-increasing $F$ and a monotonically non-decreasing $\Phi$. As discussed previously, this choice of modulation functions implements negative feedback from the state to both the amplitude and frequency of the pulses.

This special case enjoys additional properties; in particular, it is possible to substantially tighten  the bounds in Proposition~\ref{prop.bound}, which in some situations leads to an alternative global attractivity criterion.

\subsection*{A tightened estimate of the solution}

Introduce the function
\[
\Psi(x)\triangleq\frac{F(x)\e^{a\Phi(x)}}{1-\e^{a\Phi(x)}}=\frac{Q(x)-\e^{a\Phi(x)}x}{1-\e^{a\Phi(x)}}.
\]

Since $F$ is decreasing, while the assumptions $a<0$ and $\Phi$ increasing 
imply that $\e^{a\Phi(x)}$ is (monotonously) decreasing and $1-\e^{a\Phi(x)}>0$ is increasing, it follows that $\Psi$ is 
decreasing on $[0,\infty)$. Hence, the composition $\Psi\circ\Psi$ is continuous and increasing.

The set of solutions to the equation
\begin{equation}\label{eq:psi2}
x=\Psi(\Psi(x)),\qquad x\ge 0,
\end{equation}
is closed. Moreover, $\Psi$ maps $[0,\infty)$ into a bounded interval:
\[
\Psi:[0,\infty)\to [\Psi(\infty),\Psi(0)]\subseteq 
\left[
\frac{F_1\e^{a\Phi_2}}{1-\e^{a\Phi_2}},\,\frac{F_2\e^{a\Phi_1}}{1-\e^{a\Phi_1}}
\right],
\]
where $F_1,F_2$ and $\Phi_1,\Phi_2$ are the bounds introduced in~\eqref{eq:2a}, and $\Psi(\infty)=\lim_{x\to\infty}\Psi(x)$.
Hence, all solutions of~\eqref{eq:psi2} are contained in a bounded and strictly positive set. 
Therefore, the solution set of~\eqref{eq:psi2} is compact, and in particular
the minimal and maximal solutions exist. Denote them by $m_*>0$ and $M_*\geq m_*$,
respectively.

The following lemma  provides a simple iterative procedure for 
determining $m_*$ and $M_*$ as well as clarifies their relation to the solutions 
of the IGO.
\begin{lemma}\label{lem.bound}
Suppose that $\Phi(\cdot)$ is non-decreasing and $F(\cdot)$ is non-increasing.
The minimal and maximal solutions $m_*$ and $M_*$ to~\eqref{eq:psi2}
can be obtained as limits of the monotone sequences $\{m_n\}$ and $\{M_n\}$ defined by
\begin{equation*}
\begin{gathered}
m_0=0, \qquad m_n=(\Psi\circ\Psi)(m_{n-1}), \quad n\ge 1,\\
M_0=\Psi(0),\;\; M_n=(\Psi\circ\Psi)(M_{n-1})=\Psi(m_n),\;\; n\ge 1.
\end{gathered}
\end{equation*}
The sequence $\{m_n\}$ is increasing and converges to $m_*$, 
whereas $\{M_n\}$ is decreasing and converges to $M_*$. Hence, $M_*=\Psi(m_*)$ and $m_*=\Psi(M_*)$.
Also, $M_n>m_n$ for all $n$.

Finally, the interval $[m_*,M_*]$ is a global attractor for discrete-time system~\eqref{eq:discrete}: for every initial condition $x_0\geq 0$,
\[
m_*\leq\liminf_{n\to\infty}x_n\leq\limsup_{n\to\infty}x_n\leq M_*.
\]
Consequently, every solution of the hybrid dynamics satisfies the ultimate bounds
\begin{equation}\label{eq:bounds_new}
    m_*\le \liminf_{t\to\infty} x(t)
\le \limsup_{t\to\infty} x(t)
\le \e^{-a\Phi(M_*)}\,M_* .
\end{equation}
\end{lemma}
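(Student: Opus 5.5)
The plan has two parts. First, the algebraic part: treat $\Psi$ as a non-increasing map and $\Psi\circ\Psi$ as a non-decreasing one, and run the standard monotone-iteration argument. Second, the dynamical part: relate the return map $Q$ to $\Psi$, which turns the iteration into an ultimate bound for $x_n$.

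\textbf{Algebraic part.} Since $m_0=0\le m_1$, monotonicity of $\Psi\circ\Psi$ gives by induction $m_n\le m_{n+1}$. Since $\Psi\le\Psi(0)=M_0$ everywhere, we have $M_1=\Psi(\Psi(M_0))\le M_0$, and induction gives $M_{n+1}\le M_n$.

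Next I will verify the identity $M_n=\Psi(m_n)$. For $n=0$ it reads $M_0=\Psi(0)=\Psi(m_0)$. Assuming $M_{n-1}=\Psi(m_{n-1})$, we get
\[
M_n=\Psi\bigl(\Psi(M_{n-1})\bigr)=\Psi\bigl(\Psi(\Psi(m_{n-1}))\bigr)=\Psi(m_n).
\]

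Both sequences are bounded and monotone, so they converge, say $m_n\to\underline m$ and $M_n\to\overline M$. Continuity gives $\overline M=\Psi(\underline m)$, and passing to the limit in the recursion shows that $\underline m$ and $\overline M$ solve \eqref{eq:psi2}.

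For extremality, take any solution $y$ of \eqref{eq:psi2}. From $m_0=0\le y$ and $y=\Psi(\Psi(y))\le\Psi(0)$, together with $\Psi(\Psi(y))\le\Psi(\Psi(M_0))$, induction gives $m_n\le y\le M_n$. Hence $\underline m=m_*$ and $\overline M=M_*$. Then $M_*=\Psi(m_*)$, and $m_*=\Psi(\Psi(m_*))=\Psi(M_*)$.

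For the strict inequality $M_n>m_n$, I use that $\Psi(x)>0$ and that $\Psi(x)$ exceeds any fixed point of $\Psi$ whenever $x$ is below it. Concretely, the unique fixed point $p$ of the decreasing map $\Psi$ solves \eqref{eq:psi2}, so $m_n\le p\le M_n$. Strictness should follow from $m_n\le p$ with $\Psi$ strictly decreasing, or else the statement should be read as non-strict; I expect this to need care only when $\Psi$ is flat.

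\textbf{Dynamical part.} The key identity comes from the definition of $\Psi$:
\[
Q(x)=\e^{a\Phi(x)}x+\bigl(1-\e^{a\Phi(x)}\bigr)\Psi(x),
\]
so $Q(x)$ is a convex combination of $x$ and $\Psi(x)$ with weight $\e^{a\Phi(x)}\in(0,1)$. Moreover, these weights are bounded away from $0$ and $1$ by $\e^{a\Phi_2}$ and $\e^{a\Phi_1}$.

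I will show by induction on $k$ that $\liminf x_n\ge m_k$ and $\limsup x_n\le M_k$. The base case is $m_0=0$ together with $\Psi\le M_0$. The argument is as follows.
\begin{itemize}
\item If eventually $x_n\ge m_k-\varepsilon$, then $\Psi(x_n)\le\Psi(m_k-\varepsilon)$, which is close to $M_k$ by continuity.
\item The convex-combination form then gives
\[
x_{n+1}-c\le \e^{a\Phi_1}\max(x_n-c,0),
\qquad c=\Psi(m_k-\varepsilon),
\]
so the overshoot above $c$ decays geometrically. Hence $\limsup x_n\le \Psi(m_k-\varepsilon)$, and letting $\varepsilon\to0$ gives $\limsup x_n\le\Psi(m_k)=M_k$.
\item Symmetrically, from $\limsup x_n\le M_k$ we get $\liminf x_n\ge\Psi(M_k)$.
\end{itemize}

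It remains to check that $\Psi(M_k)=\Psi(\Psi(m_k))=m_{k+1}$. This is immediate, so the induction closes and the bound on $x_n$ follows with $m_*$, $M_*$.

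Finally, \eqref{eq:bounds_new} follows from \eqref{eq:x-via-xn}. On $(t_{n-1},t_n)$ we have $x(t)>x_n$, which gives the lower bound. For the upper bound, $x(t)<\e^{-a\Phi(x_{n-1})}x_n$, and $\Phi(x_{n-1})\le\Phi(M_*+\varepsilon)$ eventually; letting $\varepsilon\to0$ by continuity gives $\e^{-a\Phi(M_*)}M_*$.

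I expect the main obstacle to be the $\varepsilon$-bookkeeping in the induction, in particular making the geometric-decay step rigorous when $x_n$ crosses $c$. This is handled by the $\max(\cdot,0)$ form above: once $x_n\le c$, the convex combination stays at or below $c$.
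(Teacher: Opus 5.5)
Your proof is correct. For the attractivity part it takes a different route from the paper.

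\textbf{Comparison with the paper.} The paper works directly with $\ell=\liminf x_n$ and $L=\limsup x_n$. Using the same convex-combination identity \eqref{eq:Q-bound} and the bound $\e^{a\Phi(x_n)}\le\e^{a\Phi_1}$, it shows $L\le\Psi(\ell)$ and $\ell\ge\Psi(L)$, hence $(\Psi\circ\Psi)(\ell)\le\ell$ and $(\Psi\circ\Psi)(L)\ge L$. The intermediate value theorem then places a fixed point of $\Psi\circ\Psi$ in $[0,\ell]$ and another in $[L,\infty)$, and extremality of $m_*,M_*$ finishes. That argument is short, but it needs the a priori bound $L<\infty$ from Proposition~\ref{prop.bound} in order to pass to the $\limsup$.

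You instead push the bounds along the iteration itself: $\liminf x_n\ge m_k\Rightarrow\limsup x_n\le M_k\Rightarrow\liminf x_n\ge m_{k+1}$. The base case comes from $\Psi\le\Psi(0)$ and geometric decay of the overshoot. This needs neither Proposition~\ref{prop.bound} nor the intermediate value theorem. It also gives explicit ultimate bounds $[m_k,M_k]$ at every stage, an asymptotic counterpart of the forward invariance of $[m_n,M_n]$ that the paper proves separately. Your $\max(\cdot,0)$ form of the decay inequality is the correct one; the paper's display writes $\min$ where $\max$ is meant.

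\textbf{The strict inequality.} Your hesitation about $M_n>m_n$ is justified. With only non-strict monotonicity of $F$ and $\Phi$, $\Psi$ may be flat, and then the strict claims fail. In the open-loop case of the paper's own footnote ($F,\Phi$ constant, $\Psi\equiv c$), one gets $M_0=m_1=M_1=c$. So neither $M_1>m_1$ nor the strict monotonicity asserted in the paper's proof (e.g.\ $M_1<M_0$, $m_2>m_1$) holds. The non-strict bound $m_n\le p\le M_n$ that you derive is what the hypotheses actually give.

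If $\Psi$ is strictly decreasing, strictness does not follow from $m_n\le p$ alone. Prove $m_n<p<M_n$ by induction:
\begin{itemize}
\item $m_0=0<p$, since $p=\Psi(p)>0$;
\item $m_n<p$ gives $M_n=\Psi(m_n)>\Psi(p)=p$;
\item hence $m_{n+1}=\Psi(M_n)<\Psi(p)=p$.
\end{itemize}
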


\begin{proof}
Using induction on $n$, we show that the sequence $\{m_n\}$ is increasing.  For $n=1$, we have
$m_1=(\Psi\circ\Psi)(0)>0=m_0$ since $\Psi$ is strictly positive on $[0,\infty)$. Assume that $m_n>m_{n-1}$ for some $n\ge 1$. Since $\Psi\circ\Psi$ is increasing, it follows that
$m_{n+1}=(\Psi\circ\Psi)(m_n)
>(\Psi\circ\Psi)(m_{n-1})= m_n$. Thus $\{m_n\}$ is increasing and, in particular, it has a limit. The proof that $M_n$ is decreasing is analogous, observing that
\[
M_1=\Psi(\Psi(0))<\Psi(0)=M_0,
\]
because $\Psi(0)>0$ and $\Psi$ is decreasing on $[0,\infty)$, and applying the monotonicity of $\Psi\circ\Psi$ to prove the induction step.
A similar induction, starting with the induction step
$M_0=\Psi(m_0)>m_0$, proves that $M_n=\Psi(m_n)>m_n$.
Since $\Psi\circ\Psi$ is continuous, the limits $\bar m\triangleq\lim_{n\to\infty}m_n$ and $\bar M\triangleq\lim_{n\to\infty}M_n$ are its fixed points, i.e., solutions of~\eqref{eq:psi2}. On the other hand, if $x\geq 0$ is another solution to~\eqref{eq:psi2}, then, obviously, $x\leq\Psi(0)=M_0$. 
Using the fact that $m_0\leq x\leq M_0$ and induction on $n$, one shows that
\[
m_n\leq x=(\Psi\circ\Psi)(x)\leq M_n\quad\forall n=0,1,\ldots
\]
that is, $\bar m\leq x\leq\bar M$. In other words, $\bar m=m_*$ and $\bar M=M_*$ are the minimal and the maximal fixed points of $\Psi\circ\Psi$.

To prove the invariance of intervals $[m_n,M_n]$, notice that
\begin{equation}\label{eq:Q-bound}
Q(x)=\e^{a\Phi(x)}x+(1-\e^{a\Phi(x)})\Psi(x),
\end{equation}
lies between the points $x$ and $\Psi(x)$, since $0<\e^{a\Phi(x)}<1$. Hence if $m_n\leq x\leq M_n$, then $Q(x)\in[m_n,M_n]$ in view of
\[
m_n<m_{n+1}=\Psi(M_n)\leq\Psi(x)\leq\Psi(m_n)=M_n.
\]
The interval $[m_*,M_*]$ is forward invariant, being the intersection of forward invariant sets $[m_n,M_n]$.

Consider now a solution $\{x_n\}$ of~\eqref{eq:discrete} and denote $\ell\triangleq\liminf_{n\to\infty}x_n$ and $L\triangleq\limsup_{n\to\infty}x_n$.
Since $x_n=x(t_n^-)$ for an appropriately defined solution of~\eqref{eq:1}, Proposition~\ref{prop.bound} guarantees that $0<\ell\leq L<\infty$.
Notice that, since $\Psi$ is continuous and decreasing, one has
\[
\limsup_{n\to\infty}\Psi(x_n)=\Psi(\ell)\geq \Psi(L)=\liminf_{n\to\infty}\Psi(x_n).
\]
Now, we are going to prove that $\Psi(\ell)\geq L$. Indeed, choose an arbitrary constant $\eta>\Psi(\ell)$ and let $N$ be so large that $\Psi(n)<\eta$ for $n\geq N$.
Then, using~\eqref{eq:Q-bound} and recalling that $\Phi(x_n)\geq\Phi_1$, whence $a\Phi(x_n)\leq a\Phi_1<0$,
\[
\begin{aligned}
x_{n+1}\leq (1-\e^{a\Phi(x_n)})\eta&+\e^{a\Phi(x_n)}x_n=\eta+\e^{a\Phi(x_n)}(x_n-\eta)\\
&\leq \eta+\e^{a\Phi_1}\min\{0,x_n-\eta\}.
\end{aligned}
\]
Taking $\limsup$ leads to
\[
L\leq \eta+\e^{a\Phi_1}\min\{0,L-\eta\},
\]
which inequality can only be valid when $\eta\geq L$. In other words, we have proven the implication $\eta>\Psi(\ell)\Longrightarrow\eta\geq L$, which means that
$\Psi(\ell)\geq L$. A symmetric argument shows that $\Psi(L)\leq\ell$. Therefore, $\Psi\circ\Psi(\ell)\leq\Psi(L)\leq\ell$ and, since $\Psi\circ\Psi(0)>0$,
the interval $[0,\ell]$ thus contains a fixed point of $\Psi\circ\Psi$. The definition of $m_*$ implies that $m_*\leq\ell$. Symmetrically, $\Psi\circ\Psi(L)\geq L$, and hence
$[L,\infty)$ also contains a fixed point of $\Psi\circ\Psi$, entailing that $L\leq M_*$. This proves Lemma~\ref{lem.bound}.
\end{proof}
\color{black}

Note that the unique fixed point $x_*=Q(x_*)\geq 0$, in view of~\eqref{eq:Q-bound}, is also a fixed point of $\Psi$ and, in particular,
it is a solution to~\eqref{eq:psi2}. This leads to the following corollary.
\begin{corollary}
The unique fixed point of $Q$ from~\eqref{eq:fp} satisfies the inequalities $m_*\leq x_*\leq M_*$. In the special case where\footnote{This holds, e.g., in the case of periodic impulses driving the plant in open loop ($\Phi$, $F$ and $\Psi$ are constant), so that~\eqref{eq:psi2} has a unique solution.} $m_*=M_*$, all solutions of the discrete-time system~\eqref{eq:discrete}
converge to the fixed point $x_*=m_*=M_*$, and thus all admissible solutions of~\eqref{eq:1},~\eqref{eq:2} converge to the 1-cycle.   
\end{corollary}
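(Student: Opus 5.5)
The plan is to exploit the fact that a fixed point of $Q$ is automatically a fixed point of $\Psi$, and then apply Lemma~\ref{lem.bound}.

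\emph{Step 1.} Let $x_*=Q(x_*)>0$ be the fixed point. Representation~\eqref{eq:Q-bound} gives
\[
x_*=\e^{a\Phi(x_*)}x_*+(1-\e^{a\Phi(x_*)})\Psi(x_*),
\]
hence $(1-\e^{a\Phi(x_*)})(x_*-\Psi(x_*))=0$. Since $a<0$ and $\Phi(x_*)\geq\Phi_1>0$, the factor $1-\e^{a\Phi(x_*)}$ is strictly positive, so $\Psi(x_*)=x_*$. Consequently $\Psi(\Psi(x_*))=x_*$, i.e., $x_*$ solves~\eqref{eq:psi2}.

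\emph{Step 2.} By Lemma~\ref{lem.bound}, $m_*$ and $M_*$ are the minimal and maximal solutions of~\eqref{eq:psi2}. Therefore $m_*\leq x_*\leq M_*$. This argument applies to every fixed point of $Q$, so in general all of them lie in $[m_*,M_*]$.

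\emph{Step 3.} Now assume $m_*=M_*$. Step~2 forces every fixed point of $Q$ to equal $m_*$, so the fixed point is indeed unique and $x_*=m_*=M_*$. The global attractor statement of Lemma~\ref{lem.bound} gives, for every $x_0\geq0$,
\[
m_*\leq\liminf_{n\to\infty} x_n\leq\limsup_{n\to\infty} x_n\leq M_*=m_*.
\]
Hence $x_n\to x_*$. Proposition~\ref{prop:converge} then turns this sampled convergence into convergence of the hybrid solution to the 1-cycle.

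\emph{Step 4.} Every admissible initial condition $x(0)>0$, $t_0\geq0$ produces some $x_0=x(t_0^-)>0$. Step~3 therefore covers all admissible solutions of~\eqref{eq:1},~\eqref{eq:2}.

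The only point needing care is the uniqueness wording. Nothing preceding the corollary guarantees uniqueness of the fixed point in the monotone case. I would therefore read the first claim as holding for any fixed point of $Q$, with uniqueness established in Step~3 under $m_*=M_*$. Otherwise the argument is a direct application of Lemma~\ref{lem.bound}.
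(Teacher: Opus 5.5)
Your proof is correct and follows the paper's argument. A fixed point of $Q$ is a fixed point of $\Psi$ by \eqref{eq:Q-bound}, hence it lies between $m_*$ and $M_*$, and the squeeze from Lemma~\ref{lem.bound} together with Proposition~\ref{prop:converge} gives the rest. Your caveat on uniqueness is unnecessary: $\Psi$ is non-increasing (noted before Lemma~\ref{lem.bound}), so $x-\Psi(x)$ is strictly increasing and your Step~1 already forces the fixed point of $Q$ to be unique; the paper also proves this separately in Proposition~\ref{prop.id-Q}.
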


More generally, Lemma~\ref{lem.bound} entails that any periodic solution of~\eqref{eq:discrete} stays in the invariant set $[m_*,M_*]$. 

\subsection*{Upper bound on the derivative and uniqueness of the 1-cycle}


Another useful fact regarding the map $Q$ (observed in~\cite{ZCM12a}) is that, provided the derivatives $\Phi'(x)$ and $F'(x)$ exist, 
\begin{equation}\label{eq:less1}
    Q'(x)=a\Phi'(x)Q(x)+\e^{a\Phi(x)}(1+F'(x))\leq \e^{a\Phi_1}<1,
\end{equation}
in virtue of monotonicity of the two mappings $\Phi'(\cdot)\geq 0$, $F'(\cdot)\leq 0$. This implies the following useful proposition.
\begin{proposition}\label{prop.id-Q}
The function $x-Q(x)$ is strictly increasing on $[0,\infty)$. In particular, the fixed point~\eqref{eq:fp} is unique.
\end{proposition}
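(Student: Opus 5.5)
The plan is to apply the derivative bound~\eqref{eq:less1} to the function $g(x)\triangleq x-Q(x)$. The bound only holds almost everywhere, so we pass from it to strict monotonicity through absolute continuity.

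First, $\Phi$ and $F$ are locally Lipschitz, and products and sums of locally Lipschitz functions are locally Lipschitz. Hence $Q$, and therefore $g$, is locally Lipschitz on $[0,\infty)$. In particular $g$ is absolutely continuous on every compact interval $[x_1,x_2]\subset[0,\infty)$, and it is differentiable almost everywhere. At every point where $\Phi'$ and $F'$ both exist (almost all $x$), the chain and product rules give the expression for $Q'(x)$ in~\eqref{eq:less1}. Using $a<0$, $\Phi'\ge0$, $Q>0$, $F'\le0$ and $\Phi\ge\Phi_1$, we get
\[
g'(x)=1-Q'(x)\geq 1-\e^{a\Phi_1}>0 .
\]
For $x_1<x_2$ we then have
\[
g(x_2)-g(x_1)=\int_{x_1}^{x_2}g'(x)\,dx\ge (1-\e^{a\Phi_1})(x_2-x_1)>0,
\]
so $g$ is strictly increasing, in fact uniformly so.

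Uniqueness of the fixed point follows at once. Fixed points of $Q$ are exactly the zeros of $g$, and a strictly increasing function has at most one zero. Existence was already established after~\eqref{eq:Q}: $g(0)=-Q(0)<0$ and $g(x)>0$ for large $x$.

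The only delicate step is justifying the integral representation when the derivatives exist only almost everywhere. This is settled by the local Lipschitz property, exactly as in the proof of Lemma~\ref{lem:glob}. We also need the inequality in~\eqref{eq:less1} itself, in which the term $a\Phi'(x)Q(x)$ is nonpositive because $Q(x)>0$ and $a<0$, and the term $\e^{a\Phi(x)}(1+F'(x))$ is at most $\e^{a\Phi(x)}\le \e^{a\Phi_1}$.
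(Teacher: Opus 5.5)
Your proof is correct and follows the paper's argument. Local Lipschitz continuity makes $Q$ absolutely continuous, the almost-everywhere bound \eqref{eq:less1} gives $Q'<1$, and integrating yields strict monotonicity of $x-Q(x)$. Your version is slightly sharper: you integrate the uniform bound $1-Q'(x)\geq 1-\e^{a\Phi_1}$ to get an explicit slope, whereas the paper only uses the strict inequality $Q'<1$ inside the integral.
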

\begin{proof}
By assumption, $Q$ is locally Lipschitz. Since $\Phi,F,Q$ are absolutely continuous and differentiable at almost all points $x\geq 0$, 
~\eqref{eq:less1} holds at all $x$ except for a set of Lebesgue zero measure. Furthermore, for every pair $x_2>x_1\geq 0$, the Newton-Leibnitz formula is valid
\[
Q(x_2)-Q(x_1)=\int_{x_1}^{x_2}Q'(x)dx<\int_{x_1}^{x_2}1~\mathrm{d} x=x_2-x_1,
\]
i.e., $x_2-Q(x_2)>x_1-Q(x_1)$, so $x-Q(x)$ is increasing. The last statement is straightforward.
\end{proof}

Finally, one may notice that the global stability condition from Lemma~\ref{lem:glob} in the case of IGO simplifies to the one-sided inequality
$Q'(x)>-q>-1$, since the upper bound is guaranteed by~\eqref{eq:less1}.

\color{black}

\vspace{0.4cm}


\section{Simulation example}\label{sec.simul}
An example illustrating the utility of the proposed control approach in drug dosing is provided in this section. It features a standard pain reliever drug -- intravenous (IV) paracetamol -- an effective analgesic for acute pain management, particularly in postoperative settings, offering a fast-acting alternative when oral route is not feasible.

\subsection{PKPD model}The PK of paracetamol can be approximated~\cite{MJH08} by
\begin{equation}\label{eq:pk}
    \dot x=-k x, \quad x(0)>0,
\end{equation}
where $x$ is drug concentration in \SI{}{mg/L}, $k=\SI{0.28}{h^{-1}}$ is the first-order elimination rate constant, and $x_0$ is the administrated IV bolus dose. The effect of medication is measured on Visual Analogue Scale (VAS), where zero means ``no pain" and ten corresponds to ``worst imaginable pain" and is described by the PD model
\begin{equation}\label{eq:pd}
    y(t)=\varphi(x)\triangleq E_0-\frac{E_{\max} x(t)}{E_{C50}+x(t)},
\end{equation}
where $E_0=10$, $E_{\max}=5.17$, and $E_{C50}=\SI{9.98}{mg/L}$.  In control engineering terms, model \eqref{eq:pk}, \eqref{eq:pd} constitutes a scalar Wiener model.

The recommended dosing of paracetamol for adults is \SI{1000}{mg} three-four times a day, in oral formulation. The cumulative day dose should be kept under  \SI{4}{g} to avoid liver damage. The blood concentration range \qtyrange{10}{20}{mg/L} is considered to be safe. 
Then, in \eqref{eq:corridor}, $x_{\min}^*=10$ and $x_{\max}^*=20$, or, in terms of the measured output, $y_{\min}^*=\varphi(x_{\max}^*)=6.5510$ and $y_{\max}^*=\varphi(x_{\min}^*)=7.4124$. After a typical therapeutic dose, serum levels usually peak below \SI{30}{mg/L}, $\varphi(30)=6.1206$.
Notably, at these levels,  pain should significantly interfere with daily activities and the PD  model in \eqref{eq:pd}  is probably too conservative. 

 A simulation of open-loop paracetamol administration in IV boluses is given in Fig.~\ref{fig:open_loop}. This scenario is in line with Programmed Intermittent Bolus (PIB) technique in which boluses of an anesthetic drug are automatically injected multiple times, with or without patient-controlled boluses, \cite{GAH13}. 
 As seen in Fig.~\ref{fig:open_loop}, without feedback action, the measured output lies quite often outside of the safe corridor, resulting in both underdosing and overdosing episodes.

\begin{figure}[t]
\centering 
\includegraphics[width=0.9\linewidth]{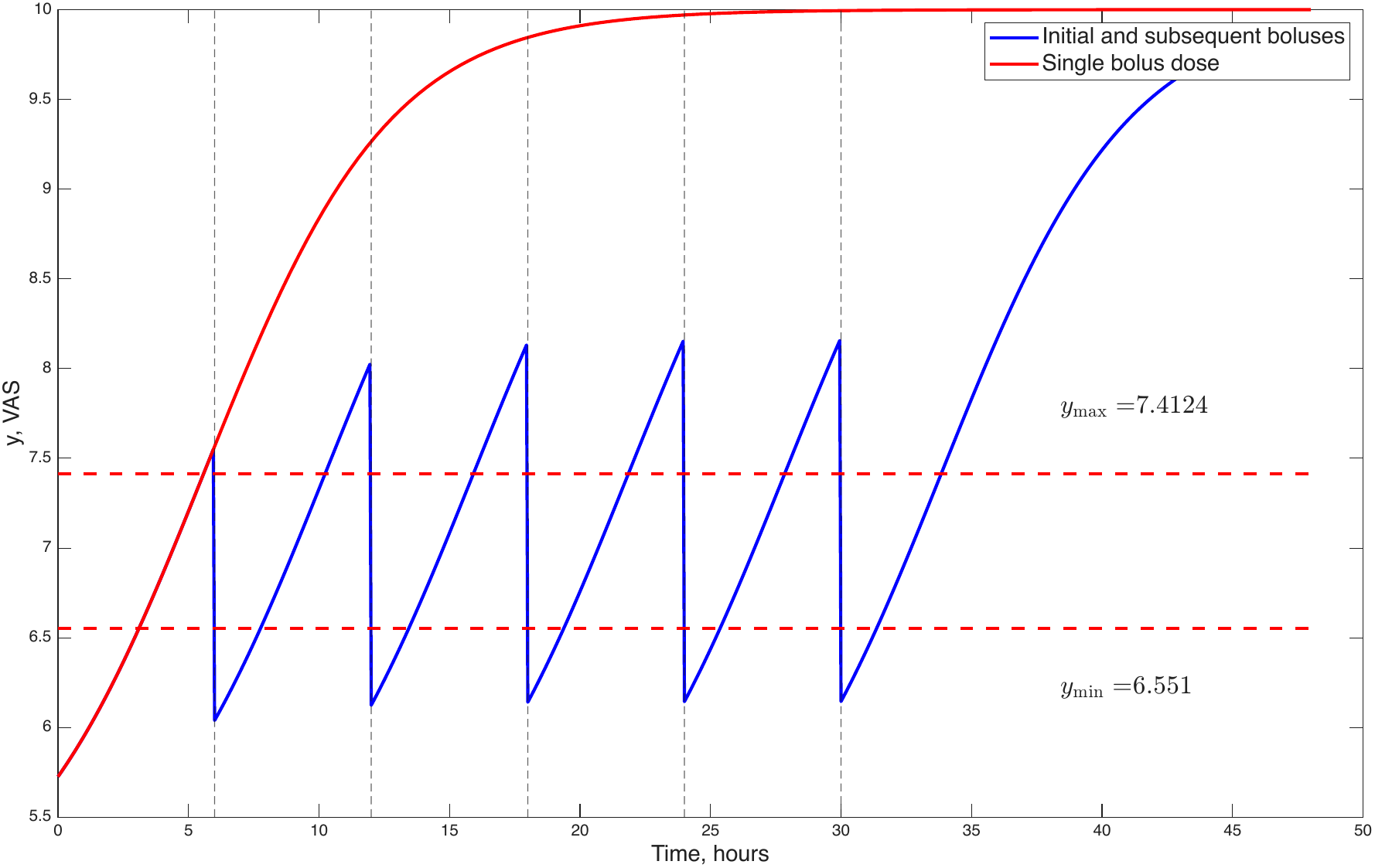}
\caption{ Open-loop dosing of paracetamol. Single  IV bolus dose (in red) vs initial ($\lambda_0=\SI{2000}{mg}$) and five subsequent IV bolus doses ($\lambda_k=\SI{1000}{mg}, k=1,\dots,5$) (in blue). The recommended output value corridor $[y_{\min}^*,y_{\max}^*]$ is marked by red dashed lines. The black vertical lines correspond to the dose administration times, with inter-dose intervals of six hours.
}\label{fig:open_loop}
\end{figure}

\subsection{Controller design}

\paragraph*{Fixed point design} We first find the corridor-spanning 1-cycle by solving equations~\eqref{eq:key-equation}.
Substituting $a=-k$, 
the period and weight of this sought 1-cycle are  
\begin{align*}
    T&=\frac{\ln x_{\min}^*-\ln x_{\max}^*}{a}=\SI{2.4755}{h},\\
    \lambda &= x_{\max}^*- x_{\min}^*=\SI{10}{mg/L},
\end{align*}
and the fixed point is evaluated to $x_*=\SI{10}{mg/L} $. Thus, to comply with the design objective, the following interpolation constraints must be satisfied:
\begin{equation}\label{eq:lambda_T}
    F(x_*)=\lambda, \quad \Phi(x_*)=T.
\end{equation}
The output value corresponding to the fixed point is
\[
y_*\triangleq \varphi(x_*)= 7.4124.
\]
\paragraph*{Stability and convergence}
A straightforward computation gives the derivative of the return map
\begin{equation}\label{eq:Jacobian_fb}
    Q^\prime(x)= \e^{a\Phi(x)}+\e^{a\Phi(x)} \begin{bmatrix}
    1 & a(x+F(x)) \end{bmatrix} \begin{bmatrix}
        F^\prime(x) \\ \Phi^\prime(x)
    \end{bmatrix}.
\end{equation}
Guaranteeing the stability condition $|Q'(x_*)|<1$ is equivalent to solving
a linear static feedback problem, in which the slopes of the modulation functions play the role of controller gains. 
The absolute value $|Q^\prime(x_*)|$ defines the local convergence rate to the 1-cycle. Thus, to achieve fastest possible convergence, i.e., $Q^\prime(x_*)=0$, the modulation functions have to be selected so that 
\begin{equation}\label{eq:deadbeat}
\begin{bmatrix}
    1 & a(x_*+F(x_*)) \end{bmatrix} \begin{bmatrix}
        F^\prime(x_*) \\ \Phi^\prime(x_*)
    \end{bmatrix}=-1.
\end{equation}
Then, from \eqref{eq:deadbeat}, the value of $F^\prime(x_*)$ can be calculated for a given $\Phi^\prime(x_*)$. In particular, picking 
$$\Phi^\prime(x_*)=\Phi^\prime_*\triangleq\frac{-1}{a(x_*+\lambda)}=0.1786,$$ 
results in $F^\prime(x_*)=0$. For affine parametrization of the modulation function used below~\eqref{eq:affine_F}, it means that $F(x)=\mathrm{const}$. 
In the case when $\Phi'(x_*)>\Phi_*'$, one gets $F'(x_*)>0$, thus violating the negative feedback mechanism (Section~\ref{sec.negative}).

\color{black}

\paragraph*{Design of modulation functions}
The drug concentration $x(t)$ is not available for measurement and the VAS rating has to be used for control instead. Then,
 for $F^\prime(\cdot)$ and $\Phi^\prime(\cdot)$  outside of saturation, the chain rule gives
\begin{align}\label{eq:F_prime_Phi_prime}
     F^\prime(x_*)&=\bar F^\prime(y_*)\varphi^\prime(x_*)= k_4 \varphi^\prime(x_*),\\ 
     \Phi^\prime(x_*)&= \bar \Phi^\prime\left(y_*\right)\varphi^\prime(x_*)= k_2 \varphi^\prime(x_*), \nonumber
\end{align}
where $\varphi',\bar\Phi,\bar F$ are found as follows:
\begin{gather}
\varphi^\prime(x)=-\frac{E_{\max} E_{C50}}{(E_{C50}+x)^2},\notag\\
\begin{aligned}
  \bar\Phi (\xi)= \begin{cases} \bar \Phi_2 &  \bar \Phi_2 < k_2\xi +k_1, \\
     k_2\xi +k_1 & \bar\Phi_1 \le  k_2\xi +k_1 \le \bar\Phi_2, \\
  \bar  \Phi_1  &  k_2\xi +k_1 < \bar \Phi_1, 
     \end{cases}
\end{aligned}\label{eq:affine_Phi}\\
\begin{aligned}
    \bar F (\xi)= \begin{cases} \bar F_1 &  k_4\xi +k_3< \bar F_1, \\
     k_4\xi +k_3 & \bar F_1 \le k_4\xi +k_3 \le \bar F_2, \\
    \bar F_2 & \bar F_2 <k_4\xi +k_3,
     \end{cases}
\end{aligned}\label{eq:affine_F}
\end{gather}
where the bounds $\bar F_1=\SI{200}{mg}$, $\bar F_2=\SI{2000}{mg}$, $\bar\Phi_1=\SI{1}{h}$, and $\bar\Phi_2=\SI{8}{h}$ are selected from recommended regimen.
With affine parameterization \eqref{eq:affine_F}, \eqref{eq:affine_Phi} and being evaluated in the fixed point, \eqref{eq:deadbeat} turns into
\begin{equation}\label{eq:k2k4}
    \begin{bmatrix}
    1 & a(x_*+\lambda) \end{bmatrix} \begin{bmatrix}
        k_4 \\ k_2
    \end{bmatrix}\varphi^\prime(x_*)=-1.
\end{equation}
To satisfy \eqref{eq:lambda_T}, the modulation functions are shifted so that 
\begin{align}\label{eq:k1k3}
    F(x_*)&= \bar F ( \varphi(x_*))=k_4 \varphi(x_*)+k_3= \lambda,\\
    \Phi(x_*)&=\bar \Phi (\varphi(x_*))= k_2 \varphi(x_*)+k_1=T. \notag
\end{align}
Now, from a value of $\Phi^\prime(x_*)$, the coefficients $k_i,i=1,\dots,4$ that completely define the pulse-modulated controller can be obtained. In view of \eqref{eq:deadbeat}, the designed controller guarantees optimal (local) convergence rate to the desired 1-cycle. To study the influence of $\Phi^\prime(x_*)$ on the controller performance, consider the following three design cases.
\begin{description}
    \item[Case~1:]\hspace{0.2cm}  $\Phi^\prime(x_*)>\Phi^\prime_*$. Selecting $\Phi^\prime(x_*)=4$, the equation above yields $F^\prime(x_*)= 21.4$. Therefore, \eqref{eq:k2k4}, together with \eqref{eq:k1k3}, give  $k_1=  25.4153$, $k_2 =-3.0948$, $k_3 =132.7279$, and $k_4=-16.5571$. 
    \item[Case~2:]\hspace{0.2cm} $\Phi^\prime(x_*)=\Phi^\prime_*$, $F^\prime(x_*)=0$. The coefficients of the modulation functions are $k_1=3.4996$, $k_2= -0.1382$, $k_3=10$, and $k_4=0$.
    
    \item[Case~3:]\hspace{0.2cm} $\Phi^\prime(x_*)<\Phi^\prime_*$. $\Phi^\prime(x_*)=4$, The coefficients of the modulation functions are $k_1=3.0490$, $k_2= -0.0774$, $k_3= 7.4766$, and $k_4=0.3404$.
\end{description}


 \subsection{Simulation} 
 The performance of the designed pulse-modulated controllers for the PKPD model in \eqref{eq:pk}, \eqref{eq:pd} is assessed in simulation. Similarly to the open-loop case depicted in Fig.~\ref{fig:open_loop},  an initial bolus \SI{2000}{mg} of paracetamol is administered. Afterward, two possibilities exist: either the drug effect  reaches $y_*=\varphi(x_*)$ or the maximum admissible time $\Phi_2=\bar\Phi_2=\SI{8}{h}$ has elapsed since the initial bolus. 
 In the former case, the closed-loop system arrives to the fixed point $x_*$ in one step (in finite time) and, provided $x_*$ is stable, never leaves it. In the latter case, the convergence to the desired periodic solution is asymptotic.


 \paragraph*{Case~1} As seen in Fig.~\ref{fig:closed_loop_case_1}, the feedback controller steers the model output into the desired corridor by means of a single drug dose, exactly, and in a finite time, i.e. in a deadbeat manner.  The reason for the observed deadbeat convergence is that, after the initial bolus and to the first feedback firing, the state variable $x(t)$ follows open-loop dynamics \eqref{eq:pk} until the fixed point $x_*$ is reached. Then, due to \eqref{eq:lambda_T}, the system enters the 1-cycle. Deadbeat performance is preserved for any $x_{\max}^*\le x(0)$ provided the time elapsed for the transition from $x(0)$ to $x(t)=x_*$ is less or equal to $\Phi_2$.
 Notice that despite the coefficients $k_2$ and $k_4$ both being negative, the fixed point is still locally stable (with super-exponential local convergence) as $Q^\prime(x_*)=0$.

\paragraph*{Case~2} This design corresponds to pure frequency modulation feedback as $\bar F(x)=\mathrm{const.}$ Since $\bar\Phi(x)$ is too flat, the controller does not wait for the initial transient to be expired and adds constant drug doses of $\lambda=\SI{10}{mg/L}$ starting from $t_1=\SI{2.7085}{h}$. Convergence to the desired corridor is therefore delayed despite the fact that $Q^\prime(x_*)=0$.

\paragraph*{Case~3} Now the modulation functions agree with the  assumptions made in Section~\ref{sec.igo} and $(\bar\Phi\circ \varphi)(x)$ is increasing while $(\bar F\circ \varphi)(x)$ is decreasing. Both frequency and amplitude modulation are exploited in the controller. Because of that, the convergence of the plant output to the corridor is slightly improved but the slope of the amplitude modulation function is too small to make a noticeable difference. 

\paragraph*{Global convergence} We show the behavior of $Q'$ in Fig.~\ref{fig:Q_prime_x}.
 \begin{figure}[h]
 \centering 
 \includegraphics[width=0.5\linewidth]{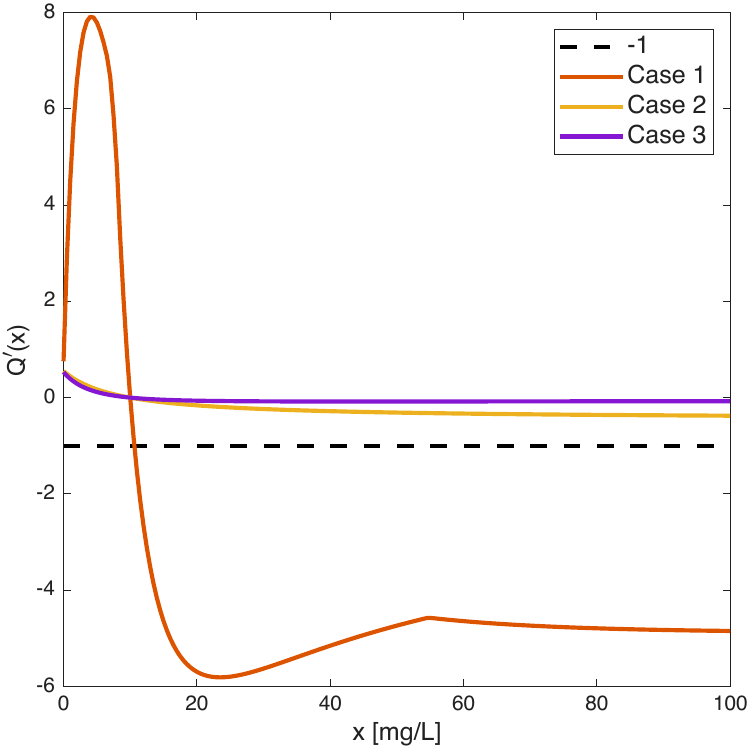}
 \caption{ $Q^\prime(x)$ calculated according to \eqref{eq:Jacobian_fb} for Case~1--Case~3. 
 }\label{fig:Q_prime_x}
 \end{figure}
 In Cases~2 and~3 the global attractivity of 1-cycle is guaranteed by Lemma~\ref{lem:glob}.
 Formally, Lemma~\ref{lem:glob} is inapplicable in Case~1, however, the event-triggered logic of the controller start makes it unnecessary to establish the global convergence as $x_0=x(t_0^-)=x_*$ by design.

\paragraph*{Bounds} All the controllers considered above drive the plant output (the medication effect) to the same predefined corridor corresponding to a stationary solution with drug concentration  $x(t)\in \lbrack 10, 20 \rbrack \SI{}{mg/L}$. It is instructive to compare the actual corridor to theoretically derived bounds. 
Proposition~\ref{prop.bound} ensures the following bounds at steady state:
\[
0.5673 \le x(t) \le 194.9872.
\]
The iterative algorithms of Lemma~\ref{lem.bound} (applicable formally only in Cases~2 and~3) converges to the fixed point $x_*$ in three steps, and yielding, according to \eqref{eq:bounds_new} improved bounds
\[
10 \le x(t) \le 20.
\]
The bounds coincide with the desired corridor values when condition~\eqref{eq:k2k4} is satisfied for the slopes of the modulation functions. In Case~1, the solution, obviously, also satisfies these bounds as it coincides with $1$-cycle for $t\geq t_0$.


 \section*{Conclusions}
The problem of controlling the output of a stable scalar continuous system into a predefined corridor of output values by means of amplitude- and frequency-modulated feedback is considered. The design of the controller is performed by means of constructing a fixed point of a discrete map and guaranteeing its stability. An approach to analyze non-local attractivity of the designed periodic attractor is proposed. Controller performance is illustrated by a simulated drug dosing application using a first-order PKPD model from the literature. Exact convergence of the controller to the predefined output corridor in finite time is demonstrated.

\begin{figure}[t]
\centering 
\includegraphics[width=0.9\linewidth]{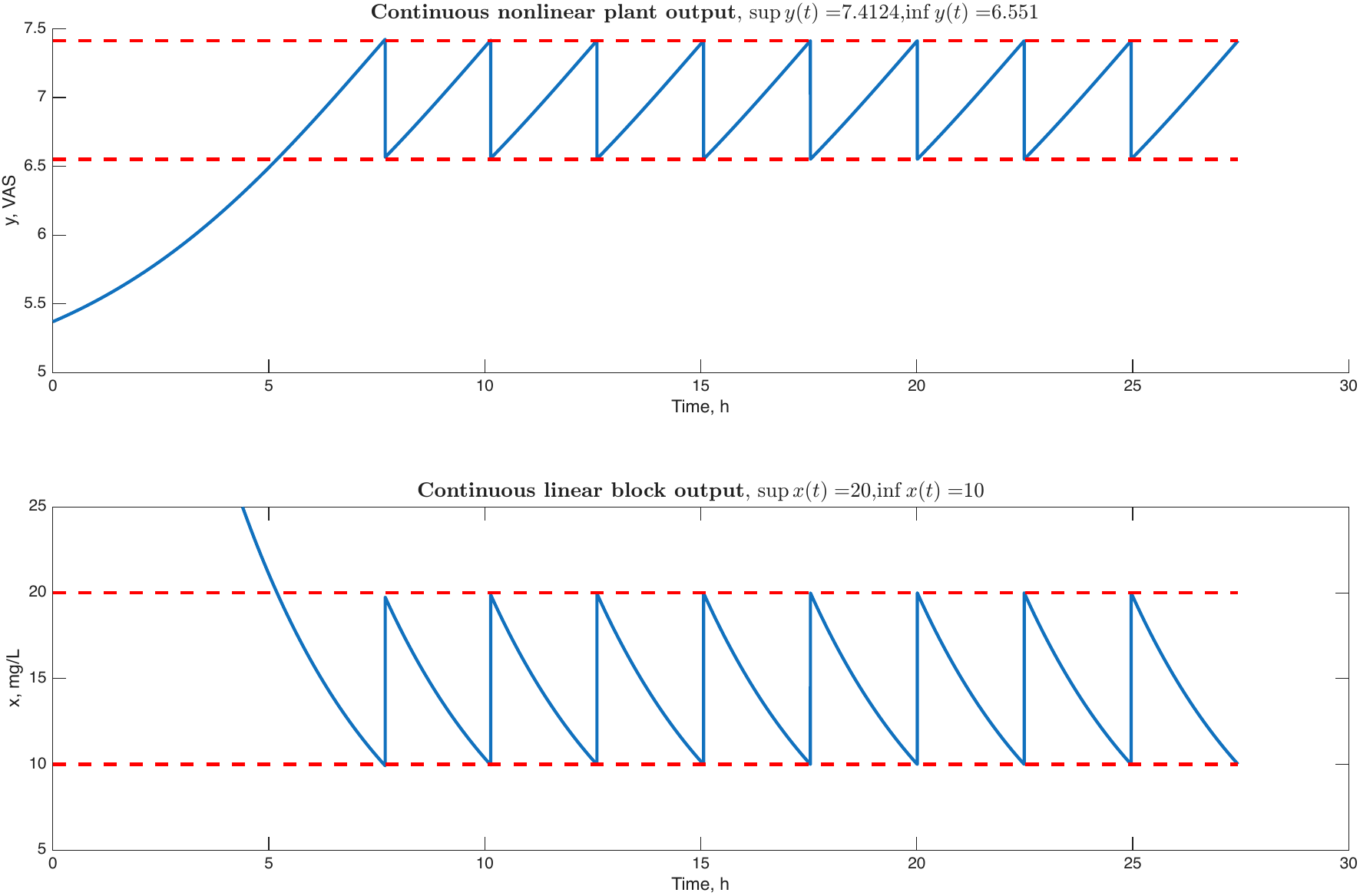}
\caption{ Closed-loop dosing of paracetamol, Case~1. After initial  IV bolus dose ($\lambda_0=\SI{2000}{mg}$), the pulse-modulated feedback drives the drug concentration and medication effect  (in blue) into desired corridor.  Upper plot: the recommended output value corridor $[y_{\min}^*,y_{\max}^*]$ is marked by red dashed lines. Lower plot: the recommended concentration corridor $[x_{\min}^*,x_{\max}^*]$ is marked by red dashed lines.
}\label{fig:closed_loop_case_1}
\end{figure}
\begin{figure}[t]
\centering 
\includegraphics[width=0.9\linewidth]{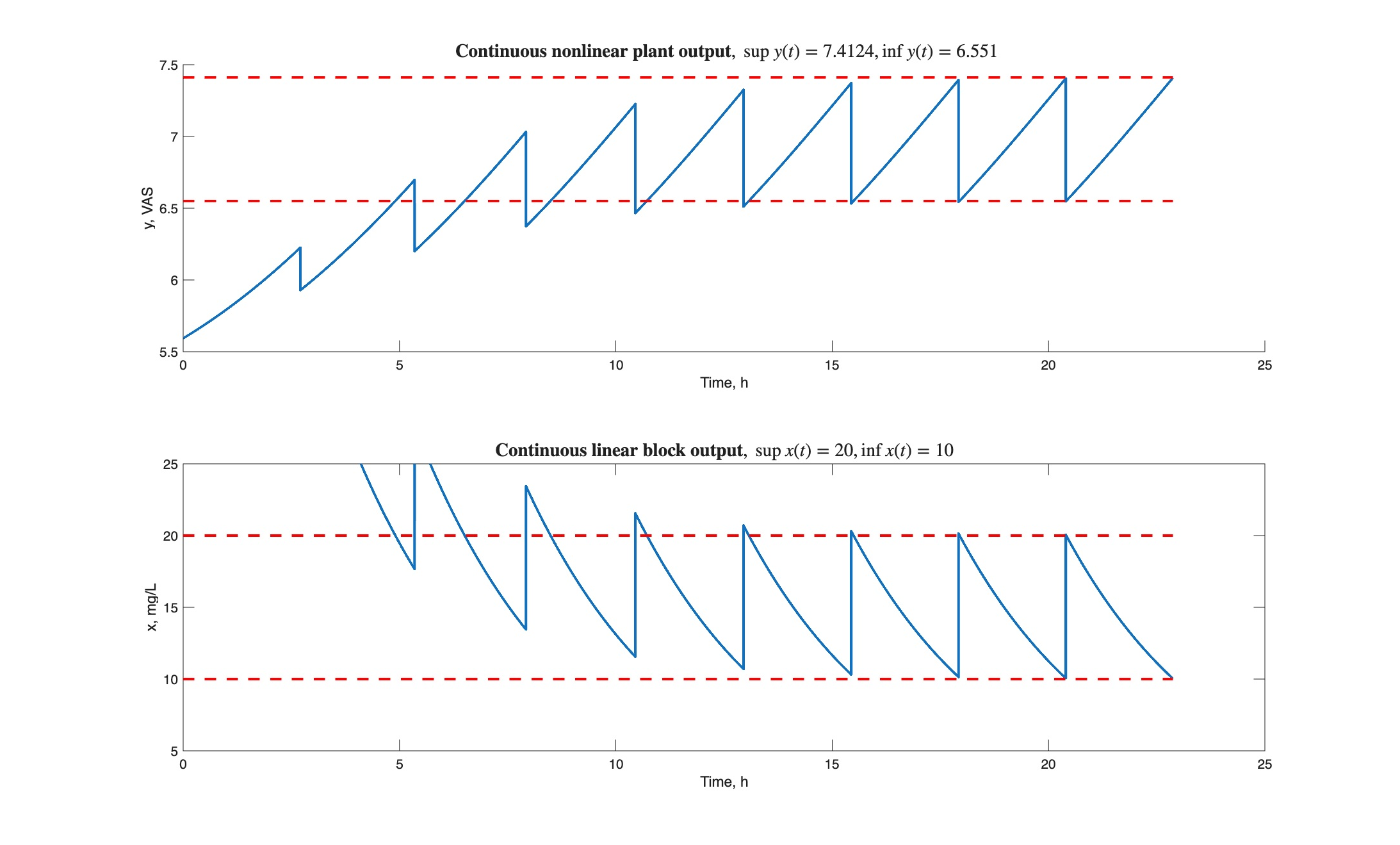}
\caption{ Closed-loop dosing of paracetamol, Case~2. After initial  IV bolus dose ($\lambda_0=\SI{2000}{mg}$), the pulse-modulated feedback drives the drug concentration and medication effect  (in blue) into desired corridor.  Upper plot: the recommended output value corridor $[y_{\min}^*,y_{\max}^*]$ is marked by red dashed lines. Lower plot: the recommended concentration corridor $[x_{\min}^*,x_{\max}^*]$ is marked by red dashed lines.
}\label{fig:closed_loop_case_2}
\end{figure}
\begin{figure}[t]
\centering 
\includegraphics[width=0.9\linewidth]{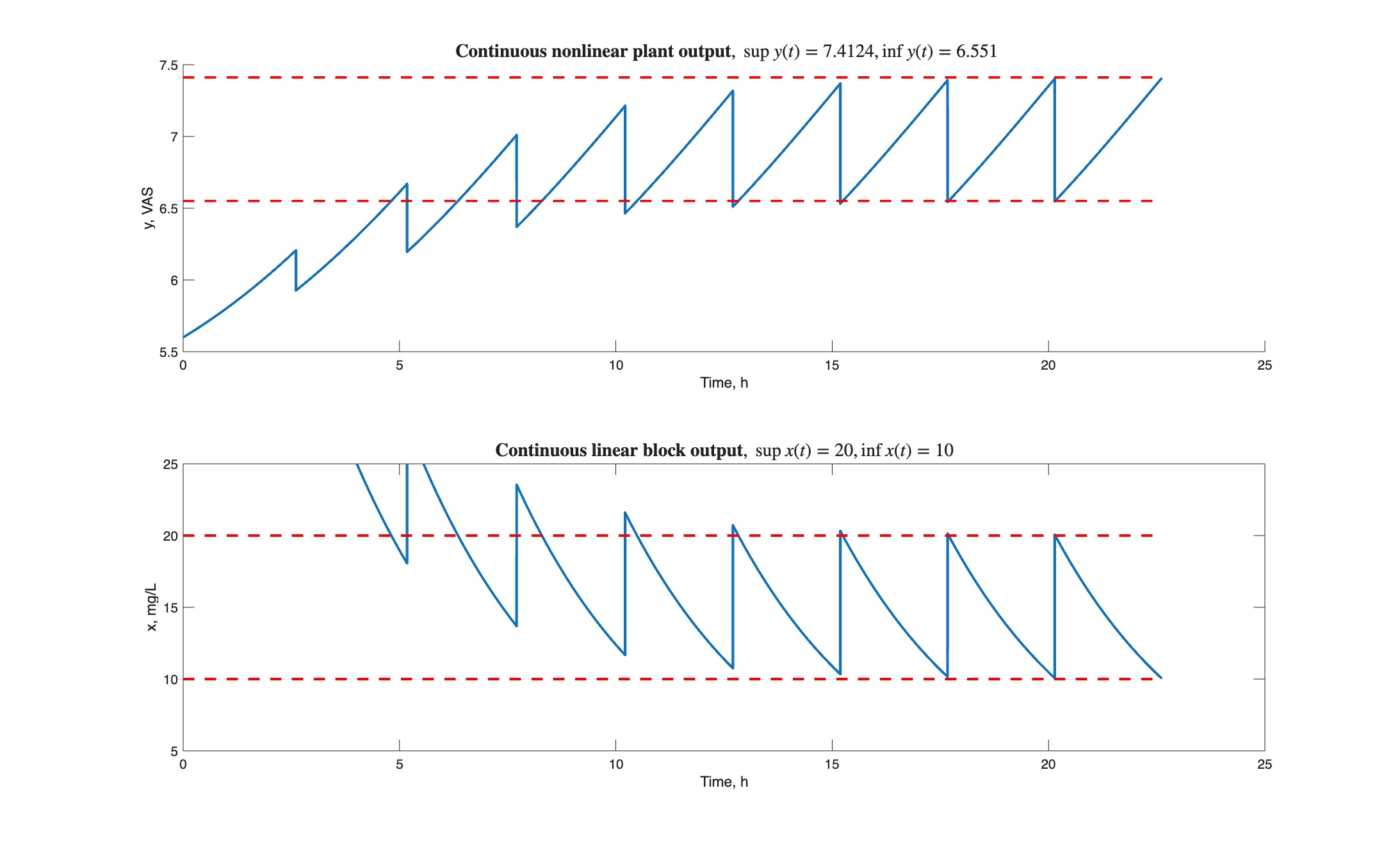}
\caption{ Closed-loop dosing of paracetamol, Case~3. After initial  IV bolus dose ($\lambda_0=\SI{2000}{mg}$), the pulse-modulated feedback drives the drug concentration and medication effect  (in blue) into desired corridor.  Upper plot: the recommended output value corridor $[y_{\min}^*,y_{\max}^*]$ is marked by red dashed lines. Lower plot: the recommended concentration corridor $[x_{\min}^*,x_{\max}^*]$ is marked by red dashed lines.
}\label{fig:closed_loop_case_3}
\end{figure}

\bibliographystyle{IEEEtran}
\bibliography{observer,refs}

\end{document}